\theoremstyle{plain}
\newtheorem{thm}{Theorem}[section]
\newtheorem{prop}[thm]{Proposition}
\newtheorem{lemma}[thm]{Lemma}
\newtheorem{cor}[thm]{Corollary}
\theoremstyle{definition}
\newtheorem{defn}[thm]{Definition}
\newtheorem*{defn*}{Definition}
\newtheorem*{question*}{Question}
\newtheorem{example}[thm]{Example}
\newtheorem*{example*}{Example}
\newtheorem{rem}[thm]{Remark}
\newtheorem*{rem*}{Remark}
\newcommand{\field}[1]{\mathbb{#1}}
\newcommand{\N}{\field{N}}
\newcommand{\Z}{\field{Z}}
\newcommand{\Q}{\field{Q}}
\newcommand{\ideal}[1]{\mathfrak{#1}}
\newcommand{\p}{\ideal{p}}
\newcommand{\ra}{\rightarrow}
\DeclareMathOperator{\ann}{ann}
\newcommand{\be}{\begin{enumerate}}
\newcommand{\ee}{\end{enumerate}}
\newcommand{\li}%{{\mathrm{lic}}}
 {\leftfootline}
\renewcommand{\phi}{\varphi}
\let\int\relax
\DeclareMathOperator{\int}{i}
\author{Neil Epstein}
\address{Department of Mathematical Sciences \\ George Mason University \\ Fairfax, VA  22030}
\email{nepstei2@gmu.edu}
\title[$W$-Egyptian rings]{Rings that are Egyptian with respect to a multiplicative set}
\date{September 19, 2023}
\begin{document}
\maketitle
\begin{abstract}
The notion of an \emph{Egyptian} integral domain $D$ (where every fraction can be written as a sum of unit fractions with denominators from $D$) is extended here to the notion that a ring $R$ is \emph{$W$-Egyptian}, with $W$ a multiplicative set in $R$.  The new notion allows denominators just from $W$.  It is shown that several results about Egyptian domains can be extended to the $W$-Egyptian context, though some cannot, as shown in counterexamples. In particular, being a sum of unit fractions from $W$ is \emph{not} equivalent to being a distinct sum of unit fractions from $W$, so we need to add the notion of the \emph{strictly} $W$-Egyptian ring.  Connections are made with Jacobson radicals, power series, products of rings, factor rings, modular arithmetic, and monoid algebras.
\end{abstract}

\section{Introduction}
In ancient Egypt, rational numbers were typically represented as sums of reciprocals of distinct integers -- for instance $\frac 29$ might be thought of either as $\frac 16+\frac 1{18}$ or as $\frac 15 + \frac 1{45}$.  In modern terms, the problem was to represent a fraction as a sum of distinct \emph{unit fractions}.  In medieval times, none other than Fibonacci showed that this is always possible \cite{DuGr-FibE}.  See \cite{Gil-phar, Rei-count} for two excellent and supremely accessible book-length introductions to the way ancient Egyptians did mathematics.

In the twentieth century, the study of so-called Egyptian fractions was taken up again.  Many interesting questions remain, the most famous of which are the Erd\H os-Straus and Sierpi\'nski conjectures, both from mid-century, which ask whether for every $n>4$, the number $\frac 4n$ (resp.  $\frac 5n$) can be represented as a sum of \emph{three} distinct unit fractions.  For a survey of these and other related developments, see \cite{Gra-ErdEgypt}.

A natural question from a ring-theoretic point of view is then: Given an integral domain $D$, can one represent any element of the fraction field as a sum of reciprocals of distinct elements of $D$?  The study of such domains was initiated in \cite{GLO-Egypt}.  In that paper, an element $0 \neq d \in D$ was called \emph{Egyptian} if it could be so represented, and $D$ was called an \emph{Egyptian domain} if all nonzero elements were Egyptian.  There it was proved that $\mathbb Z$ is Egyptian, that group rings over a field or $\Z$ are Egyptian, that $k[x]$ is not Egyptian but all its overrings are, that overrings and integral extensions of Egyptian domains are Egyptian, that any domain with a nonzero Jacobson radical is Egyptian, and that any element that can be written as a sum of reciprocals of elements of $D$ can be rewritten as a sum of reciprocals of \emph{distinct} elements of $D$. 
The current author entered the story at this point, as I continued the study of Egyptian domains in my article \cite{nme-Edom}. I showed, e.g., that no positively graded domain is Egyptian, that domains which are affine semigroup rings are not Egyptian unless the semigroup is a group, that $W^{-1}D[X]$ is Egyptian whenever $D$ is Egyptian and $W$ contains positive degree elements, and that the pullback of an Egyptian domain under the usual pullback construction is Egyptian. It follows that an excellent Noetherian domain is Egyptian if and only if its integral closure is.  %I also introduced the notions of \emph{locally} and \emph{generically} Egyptian domains, showing that any domain that is a finitely generated algebra over a field or $\Z$ is locally Egyptian, even though by \cite[Proposition 1]{GLO-Egypt} such algebras are often not Egyptian.

The next logical question to ask was whether or how much of this theory can be ported to rings with zero-divisors.  See \cite[Problem 5]{GLO-Egypt}.  The current paper provides a framework for answering this question.  Namely, given a ring $R$ and a multiplicative set $W$, we will explore what it means for an element $g \in W^{-1}R$ to be \emph{(strictly) $W$-Egyptian}.  Using this framework, I prove generalizations of various results from \cite{nme-Edom} and \cite{GLO-Egypt}.  However, I also show that not \emph{all} results from those papers generalize, necessitating a distinction marked by the word ``strictly''.

For the reader's convenience, I provide here a list of some results from \cite{nme-Edom} and \cite{GLO-Egypt}, most of which have been alluded to above, which will be dealt with relative to a multiplicative set in the rest of the paper:

\begin{thm}\label{thm:prevs}
Let $D$ be an integral domain and $0 \neq d \in D$.
\begin{enumerate}
    \item\label{it:nonstrict}\cite[Theorem 2]{GLO-Egypt} If $d$ can be expressed as a sum of unit fractions with denominators in $D$, then it can be rewritten as a sum of \emph{distinct} such unit fractions.  Hence, if every $d\in D$ can be written as a sum of unit fractions from $D$, then $D$ is an Egyptian domain.
    \item\label{it:Jacrad}\cite[Example 3]{GLO-Egypt} If the Jacobson radical of $D$ is nonzero, then $D$ is Egyptian.  Indeed $d$ can be written as a sum of reciprocals of at most three distinct elements of $D$.
    \item\label{it:ps}\cite[Proposition 2]{GLO-Egypt} The power series ring $D[\![X]\!]$ is Egyptian.
    \item\label{it:generic}\cite[Lemma 3.9]{nme-Edom} Let $D \subseteq R$ be an extension of integral domains such that $D$ is Egyptian and $R$ can be finitely generated as an $D$-algebra, say $R = D[u_1, \ldots, u_t]$.  Then $R[1/u]$ is Egyptian, where $u = \prod_{j=1}^t u_j$.  In particular, any domain that is finitely generated over a field or over $\Z$ can be made Egyptian by inverting one element.
    \item\label{it:semigroup}\cite[Theorem 2.6]{nme-Edom} Let $n$ be a positive integer, and let $\Lambda$ be an additive submonoid of $\Q^n$ that is not a group. Then $D[\Lambda]$ is \emph{not} Egyptian.
\end{enumerate}
\end{thm}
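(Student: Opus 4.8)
The plan is to manufacture a real-valued ``degree'' function on $\Frac(D[\Lambda])$ that is nonnegative on $D[\Lambda]$ but strictly positive on some monomial, and then to observe that any sum of unit fractions must have nonpositive degree. Concretely, I would first produce an $\R$-linear functional $\psi \colon \Q^n \to \R$ with $\psi(\Lambda) \subseteq \R_{\geq 0}$ and $\psi(\lambda_0) > 0$ for some $\lambda_0 \in \Lambda$. Granting such a $\psi$, I define for nonzero $f = \sum_\lambda c_\lambda X^\lambda \in D[\Lambda]$ the degree $w(f) = \max\{\psi(\lambda) : c_\lambda \neq 0\}$, and extend it to $\Frac(D[\Lambda])^\times$ by $w(a/b) = w(a) - w(b)$.

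The first substantive step is to verify that $w$ behaves like a degree. Since $\Lambda \subseteq \Q^n$ is cancellative and torsion-free and $D$ is a domain, $D[\Lambda]$ is a domain, so $\Frac(D[\Lambda])$ makes sense. The only nonformal point is multiplicativity $w(fg) = w(f) + w(g)$: writing $f_{\mathrm{top}}$ and $g_{\mathrm{top}}$ for the sums of those terms of $f$ and $g$ at which $\psi$ attains its maximum, the product $f_{\mathrm{top}} g_{\mathrm{top}}$ is supported in $\psi$-degree exactly $w(f) + w(g)$ and is nonzero because $D[\Lambda]$ is a domain, so no cancellation can lower the top degree of $fg$. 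Together with the obvious $w(f + g) \leq \max(w(f), w(g))$ and with $w(f) \geq 0$ for $0 \neq f \in D[\Lambda]$ (every exponent lies in $\Lambda$, where $\psi \geq 0$), this shows $w$ is an $\R$-valued valuation that is nonnegative on $D[\Lambda]$.

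With $w$ in hand the conclusion is immediate. If $g = \sum_{j=1}^r 1/f_j$ with $0 \neq f_j \in D[\Lambda]$, then $w(1/f_j) = -w(f_j) \leq 0$ for every $j$, whence $w(g) \leq \max_j w(1/f_j) \leq 0$. Thus every element of $D[\Lambda]$ that is a sum of unit fractions (repeats allowed) has degree $\leq 0$. But $w(X^{\lambda_0}) = \psi(\lambda_0) > 0$, so the monomial $X^{\lambda_0}$ is not a sum of unit fractions, a fortiori not a sum of \emph{distinct} ones, and hence $D[\Lambda]$ is not Egyptian.

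The hard part is producing $\psi$, and this is where I expect the real work to lie. Let $V \subseteq \Q^n$ be the $\Q$-span of $\Lambda$ and let $C = \R_{\geq 0}\Lambda \subseteq V \otimes_\Q \R =: V_\R$ be the real cone it generates. I would first show that $\Lambda$ being a non-group forces $C \neq V_\R$: if $C = V_\R$, then for every $\lambda \in \Lambda$ the vector $-\lambda$ lies in $C$, i.e.\ $-\lambda = \sum_i s_i \mu_i$ with $s_i \geq 0$ and $\mu_i \in \Lambda$; since this is a feasible \emph{rational} linear system it admits a rational solution, and clearing denominators gives $-N\lambda \in \Lambda$ for some integer $N \geq 1$, whence $-\lambda = (N-1)\lambda + (-N\lambda) \in \Lambda$, making $\Lambda$ a group, contrary to hypothesis. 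Given $C \neq V_\R$, a short convexity argument (the interior of a convex set equals that of its closure, and $C$ spans $V_\R$, so $\overline{C} = V_\R$ would force $C = V_\R$) shows $\overline{C}$ is a proper closed convex cone; a supporting hyperplane then yields a nonzero $\psi \in V_\R^*$ with $\psi \geq 0$ on $\overline{C} \supseteq \Lambda$, which I extend arbitrarily to $\Q^n$. Since $\psi \neq 0$ and $\Lambda$ spans $V$, some $\lambda_0 \in \Lambda$ has $\psi(\lambda_0) > 0$. I note that permitting $\psi$ to be \emph{real} rather than rational is exactly what makes this separation painless: the valuation $w$ is content to take values in $\R$, so there is no need to chase rational supporting functionals.
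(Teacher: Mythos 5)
Your proposal has a genuine gap, but it is one of scope rather than of mathematics. The statement you were given is a five-part compendium: (1) sums of unit fractions can be rewritten with distinct denominators, (2) nonzero Jacobson radical implies Egyptian, (3) $D[\![X]\!]$ is Egyptian, (4) the localization $R[1/u]$ of a finitely generated extension of an Egyptian domain is Egyptian, and (5) the semigroup-ring result. Your argument addresses only item (5); items (1)--(4) are never mentioned, so as a proof of the stated theorem the proposal is incomplete. You should also be aware that the paper itself offers \emph{no} proof of this theorem: it is a list of previously known results quoted from \cite{GLO-Egypt} and \cite{nme-Edom}, which the body of the paper then generalizes to the $W$-Egyptian setting (its Lemma~\ref{lem:quotient} and the final theorem of Section~2 in particular subsume item (5)). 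So there is no in-paper argument to compare yours against; the relevant comparison is with the proof of \cite[Theorem 2.6]{nme-Edom}.

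For what it covers, your argument is correct and self-contained. The multiplicativity $w(fg)=w(f)+w(g)$ does follow from the top-part decomposition together with the fact that $D[\Lambda]$ is a domain ($\Lambda\subseteq\Q^n$ is cancellative and totally orderable), which makes $w$ well defined on $\Frac(D[\Lambda])$ and subadditive on sums; the conclusion that any sum of unit fractions has $w\leq 0$ while $w(X^{\lambda_0})>0$ is then immediate. The delicate steps are also sound: the rational-feasibility argument (a feasible linear system with rational data has a rational solution) correctly converts $-\lambda\in C$ into $-N\lambda\in\Lambda$ and hence $-\lambda=(N-1)\lambda+(-N\lambda)\in\Lambda$, showing $C=V_\R$ would force $\Lambda$ to be a group; the identity of interiors of a convex set and its closure (legitimate here since $C$ contains an $\R$-basis of $V_\R$ and so has nonempty interior) shows $\overline{C}$ is proper; and separation then yields $\psi\geq 0$ on $\Lambda$ with $\psi(\lambda_0)>0$. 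This is in essence a real-valued variant of the graded-domain argument behind the cited original (no positively graded domain is Egyptian, plus construction of a suitable grading on $D[\Lambda]$), and, as you note, allowing $\psi$ to be real-valued spares you the work of rationalizing the supporting functional. To make the proposal a proof of the theorem as stated, you must either supply arguments for items (1)--(4) as well, or state explicitly that you are re-proving only item (5) and cite the remaining parts.
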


\section{$W$-Egyptian rings}
\begin{defn}
Let $R$ be a commutative ring and $W$ a multiplicative set.  Let $c \in W^{-1}R$.  We say that $c$ is \emph{$W$-Egyptian} (resp. \emph{strictly $W$-Egyptian}) if it can be written as the sum of reciprocals of (resp. distinct) elements of $W$.  If $b \in R$, we say $b$ is (\emph{strictly})\emph{ $W$-Egyptian} if $b/1 \in W^{-1}R$ is (strictly) $W$-Egyptian.

We say that $R$ itself is \emph{$W$-Egyptian} (resp. \emph{strictly $W$-Egyptian}) if every nonzero element of $R$ satisfies the corresponding property.

If $W = $ the set of regular elements of $R$, we omit the prefix and speak simply of Egyptian or strictly Egyptian elements or rings.
\end{defn}

Note that when $R$ is an integral domain and $0\neq r \in R$, the above agrees with the terminology of calling $R$ or $r$ \emph{Egyptian} as introduced in \cite{GLO-Egypt}.

\begin{example}\label{ex:nonstrict}
Unlike integral domains (see Theorem~\ref{thm:prevs}(\ref{it:nonstrict})), the Egyptian and strictly Egyptian properties are \emph{not} in general equivalent for rings with zero-divisors.  Consider the ring $R=\Z/6\Z$.  The regular elements are 1 and 5 (each of which is its own reciprocal), so there can be at most 4 sums of distinct such elements (including the empty sum).  In particular, the element $2$ is not strictly Egyptian, so the ring $\Z/6\Z$ is not strictly Egyptian.  However, $\Z/6\Z$ is Egyptian, as every element is a positive integer multiple of $1/1$.  For the same reason, every quotient of $\Z$ is Egyptian.

A similar analysis shows that $\Z/4\Z$ is not strictly Egyptian.

By contrast, $\Z/8\Z$ \emph{is} strictly Egyptian.  Indeed, every element is a sum of at most two distinct units, as the set of units is $\{1,3,5,7\}$ and we have $3+7=2$, $1+3=4$, $1+5=6$, and $1+7=0$.
\end{example}

\begin{rem}
If $0 \in W$, then $W^{-1}R=0$ (see \cite[p. 38, Example 2]{AtMac-ICA}), so $R$ is strictly $W$-Egyptian as $\frac r1 =0= \frac 10$ in $W^{-1}R$ for all $r\in R$.
\end{rem}

\begin{rem}
Let $R$ be a ring, $W$ a multiplicative set, and $f = r/w \in W^{-1}R$.  If $r/1$ is $W$-Egyptian, then so is $f$.  To see this, write $\frac r1 = \frac 1 {w_1} + \cdots + \frac 1 {w_n}$ with each $w_j \in W$.  Then $f=\frac rw = \frac 1 {w_1w} + \cdots + \frac 1 {w_nw}$. 
\end{rem}

\begin{rem}\label{rem:mscontain}
Let $R$ be a ring and $V \subseteq W$ multiplicatively closed subsets of $R$. It follows from the definition that if $R$ is $V$-Egyptian (resp. strictly $V$-Egyptian), it is also $W$-Egyptian (resp. strictly $W$-Egyptian).
\end{rem}

The following two results are generalizations of Theorem~\ref{thm:prevs}(\ref{it:Jacrad}).%$\cite[Example 3]{GLO-Egypt}.

\begin{prop}
Let $R$ be a commutative ring.  Let $j$ be an element of the Jacobson radical of $R$.  Let $W$ be the multiplicative set generated by $j$ and the units of $R$.  Then $R$ is $W$-Egyptian. 
\end{prop}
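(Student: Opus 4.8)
The plan is to realize $W^{-1}R$ concretely and then produce an explicit, short Egyptian representation of each element, mimicking the domain argument behind Theorem~\ref{thm:prevs}(\ref{it:Jacrad}). First I would record that every element of $W$ has the form $u j^n$ with $u \in R^{\times}$ and $n \ge 0$, so that inverting $W$ only adjoins an inverse for $j$ (inverting units changes nothing); concretely, the available unit fractions $1/w$, $w \in W$, are exactly the elements $u^{-1} j^{-n}$ of $W^{-1}R$. I would also isolate the one feature of the Jacobson radical that does all the work: since $j$ lies in it and the radical is an ideal, $1 - rj$ is a unit of $R$ for every $r \in R$.

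Next, rather than aiming for three terms as in the domain case, I expect two unit fractions to suffice. Fix $b \in R$ and set $w_1 = j$, so that the ``remainder'' is $b - \tfrac{1}{j} = \tfrac{bj-1}{j}$ in $W^{-1}R$. The key observation is that $bj - 1 = -(1-bj)$ is a unit of $R$, by the radical property applied to $r = b$. I would then define $w_2 = -(1-bj)^{-1} j$ and check that $w_2 \in W$, precisely because it is a unit multiple of $j$; this is the step where membership in $W$ depends on the Jacobson radical hypothesis.

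Finally I would verify the identity $\tfrac{1}{w_1} + \tfrac{1}{w_2} = b$ in $W^{-1}R$ by a direct computation: $\tfrac{1}{w_2} = -(1-bj)\,j^{-1}$, so the sum collapses to $\bigl(1 - (1-bj)\bigr) j^{-1} = (bj)\,j^{-1} = b$. Since $b$ is arbitrary, this shows $R$ is $W$-Egyptian, with every element in fact a sum of at most two unit fractions from $W$.

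The main obstacle is conceptual rather than computational: one must hit on the right choice of remainder term, i.e.\ see that subtracting $1/j$ converts the problem into inverting $1 - bj$, and then recognize that the resulting quotient $\tfrac{bj-1}{j}$ is itself a legitimate unit fraction from $W$. A minor point to dispatch is the degenerate case in which $j$ is nilpotent (or $j=0$): then $0 \in W$, so $W^{-1}R = 0$ and $R$ is trivially $W$-Egyptian; otherwise the construction above applies verbatim.
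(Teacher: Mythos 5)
Your proposal is correct and follows essentially the same route as the paper: both decompose $b$ as $\frac{1}{j}$ plus the remainder $\frac{bj-1}{j}$, observe that $bj-1$ is a unit by the Jacobson radical property, and recognize the remainder as the unit fraction $\frac{1}{j(bj-1)^{-1}}$ with $j(bj-1)^{-1} \in W$. Your added remarks (the explicit description of $W$ as $\{uj^n\}$ and the degenerate nilpotent case) are harmless but unnecessary, since the two-term identity holds in $W^{-1}R$ regardless.
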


\begin{proof}
Let $0 \neq f \in R$.  Then $jf-1$ is a unit in $R$, so $(jf-1)^{-1} \in W$.  Then in $W^{-1}R$, \[
\frac f1 = \frac {1+(jf-1)}j = \frac 1j + \frac 1 {j(jf-1)^{-1}}. \qedhere
\]
\end{proof}

\begin{thm}\label{thm:jrreg}
If the Jacobson radical of a ring $R$ admits a regular element $j$, then $R$ is strictly $W$-Egyptian (hence also strictly Egyptian by Remark~\ref{rem:mscontain}), where $W$ is the multiplicative set generated by $j$ and the units of $R$.  In fact, any nonzero element of $R$ can be written as a sum of at most \emph{two} distinct unit fractions from $W$.
\end{thm}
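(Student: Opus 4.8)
The plan is to refine the computation already carried out in the preceding Proposition. That computation gives $\frac f1 = \frac 1j + \frac 1{j(jf-1)^{-1}}$, which exhibits any $0\neq f\in R$ as a sum of two unit fractions from $W$; the only genuinely new work for the present statement is to force the two denominators to be \emph{distinct}, and here the regularity of $j$ is what I would exploit.

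First I would record two preliminary facts. Since $W$ is generated by the regular element $j$ together with units, every element of $W$ is a product of regular elements and is therefore itself regular. Consequently, for $a,b\in W$ the unit fractions $\frac 1a$ and $\frac 1b$ are equal in $W^{-1}R$ if and only if $a=b$ in $R$ (cancel the regular element witnessing an equality of fractions), so distinctness of denominators and distinctness of fractions coincide. Second, because $j$ lies in the Jacobson radical, so does $j^n$ for every $n\ge 1$, whence $1-rj^n$ is a unit for all $r\in R$; in particular $j^nf-1$ and $1-j$ are units.

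Next I would set up a one-parameter family of decompositions and use only the first two members. For each $n\ge 1$, since $j^nf-1$ is a unit, the identity
\[
\frac f1 = \frac{1}{j^n} + \frac{1}{j^n(j^nf-1)^{-1}}
\]
holds in $W^{-1}R$ and writes $f$ as a sum of two unit fractions whose denominators $j^n$ and $j^n(j^nf-1)^{-1}$ both lie in $W$. Cancelling the regular element $j^n$, these two denominators coincide precisely when $(j^nf-1)^{-1}=1$, that is, exactly when $j^nf=2$.

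Finally I would show that this ``bad'' condition $j^nf=2$ cannot hold simultaneously for $n=1$ and $n=2$. If it did, then $jf=2=j^2f$, so $jf(j-1)=0$, whence $2(j-1)=0$; since $1-j$ is a unit this forces $2=0$ and hence $jf=0$, contradicting the facts that $f\neq 0$ and $j$ is regular. Therefore at least one of $n=1,2$ satisfies $j^nf\neq 2$, and the corresponding display expresses $f$ as a sum of exactly two distinct unit fractions from $W$ (in particular, at most two), establishing the strict $W$-Egyptian property with the promised bound. The main obstacle is essentially the single degenerate case in which the Proposition's denominators collide; the device of bumping the exponent from $j$ to $j^2$ and invoking $2\neq 0$ is what dissolves it.
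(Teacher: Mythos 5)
Your proposal is correct and follows essentially the same route as the paper: use the decomposition $\frac f1 = \frac 1{j^n} + \frac 1{j^n(j^nf-1)^{-1}}$ for $n=1$ and, in the degenerate case $jf=2$, bump to $n=2$, then derive a contradiction from $jf=j^2f=2$ using that $j$ is regular and $j-1$ is a unit. The only cosmetic difference is that you pass through $2=0$ where the paper cancels $j$ and $j-1$ directly from $jf(j-1)=0$; both yield $f=0$ and the same conclusion.
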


\begin{proof}
Let $0 \neq f \in R$. As in the previous proof, we have $\frac f1 = \frac 1j + \frac 1 {j(jf-1)^{-1}}$.  This is already a strictly $W$-Egyptian representation unless $j=j(jf-1)^{-1}$ as elements of $R$.  Since $j$ is regular, the above is equivalent to saying $1=(jf-1)^{-1}$, which in turn is equivalent to $jf=2$.  Now, $j^2$ is also a regular element in the Jacobson radical, so $\frac f1 = \frac 1{j^2} + \frac 1 {j^2(j^2f-1)^{-1}}$ is a strictly $W$-Egyptian representation unless $j^2f=2$.  But since $jf=2$ as well, we have $jf(j-1)=j^2f-jf =2-2=0$, so since $j$ is regular and $j-1$ is a unit it follows that $f=0$, which contradicts the original assumption.
\end{proof}

\begin{rem}
    Theorem~\ref{thm:jrreg} is an improvement of Theorem~\ref{thm:prevs}(\ref{it:Jacrad}) even in its original context, as we now know that the upper bound of \emph{three} summands given there can be reduced to \emph{two}.
\end{rem}

The following is a generalization of Theorem~\ref{thm:prevs}(\ref{it:ps}).%\cite[Proposition 2]{GLO-Egypt}.
\begin{cor}
For any nonzero ring $R$, its power series extension $R[\![X]\!]$ is strictly Egyptian.
\end{cor}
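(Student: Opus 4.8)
The plan is to reduce the statement to Theorem~\ref{thm:jrreg} by exhibiting a regular element of the Jacobson radical of $R[\![X]\!]$. The natural candidate is the variable $X$ itself. Once I check that $X$ is both regular and a member of the Jacobson radical, the corollary follows immediately: Theorem~\ref{thm:jrreg} will then guarantee that $R[\![X]\!]$ is strictly $W$-Egyptian, where $W$ is the multiplicative set generated by $X$ and the units of $R[\![X]\!]$, and Remark~\ref{rem:mscontain} upgrades this to strict Egyptianness.

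First I would verify that $X$ is a regular element of $R[\![X]\!]$. Writing a power series as $f = \sum_{i \ge 0} a_i X^i$, multiplication by $X$ sends $f$ to $\sum_{i \ge 0} a_i X^{i+1}$, so $Xf = 0$ forces every coefficient $a_i$ to vanish, i.e. $f = 0$. Since $R$ is nonzero we also have $X \neq 0$, so $X$ is a genuine non-zerodivisor. (This is precisely where the hypothesis that $R$ is nonzero is used, so that the Jacobson radical admits an honest regular element.)

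Next I would show that $X$ lies in the Jacobson radical of $R[\![X]\!]$. Here I would invoke the standard fact that a power series is a unit in $R[\![X]\!]$ exactly when its constant term is a unit in $R$. For any $g \in R[\![X]\!]$, the series $1 - Xg$ has constant term $1$ and is therefore a unit; by the characterization of the Jacobson radical as $\{a : 1 - ab \text{ is a unit for all } b\}$, this places $X$ in the Jacobson radical.

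With both facts in hand, Theorem~\ref{thm:jrreg} applies verbatim with $j = X$. I do not anticipate any real obstacle: the only substantive content is recognizing $X$ as the appropriate regular element of the Jacobson radical, and the verification of both properties is routine once the unit criterion for power series is invoked. As a bonus, tracing through the conclusion of Theorem~\ref{thm:jrreg} shows that every nonzero element of $R[\![X]\!]$ can in fact be written as a sum of at most \emph{two} distinct unit fractions whose denominators are powers of $X$ times units.
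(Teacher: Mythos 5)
Your proposal is correct and follows exactly the paper's approach: the paper's entire proof is the one-line observation that $X$ is a regular element in the Jacobson radical of $R[\![X]\!]$, which is precisely your reduction to Theorem~\ref{thm:jrreg}. You simply spell out the routine verifications (regularity via coefficient comparison, Jacobson radical membership via the unit criterion for power series) that the paper leaves implicit.
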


\begin{proof}
$X$ is a regular element in the Jacobson radical of $R[\![X]\!]$.
\end{proof}

The next two results track how Egyptianness with respect to a multiplicative set passes from generating sets and across injective ring maps.

\begin{prop}\label{pr:gens}
Let $R$ be a commutative ring and $W$ a multiplicative set.  Let $G$ be a generating set for $R$ as a ring.  If every element of $G$ is $W$-Egyptian, then $R$ is $W$-Egyptian.
\end{prop}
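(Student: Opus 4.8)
The plan is to prove the statement by showing that the collection $E \subseteq R$ of $W$-Egyptian elements of $R$ -- that is, the set of $r \in R$ for which $r/1 \in W^{-1}R$ is a finite sum of reciprocals of elements of $W$ -- is a subring of $R$ that contains $G$. Since $G$ generates $R$ as a ring, this forces $E = R$, so in particular every nonzero element of $R$ is $W$-Egyptian, which is exactly the conclusion. Thus the entire argument reduces to verifying that $E$ is closed under the ring operations, since it contains $G$ by hypothesis.

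The two multiplicative-semiring closures are the heart of the matter. First, $1 \in E$ because $1/1$ is the reciprocal of $1 \in W$. Closure under addition is immediate: if $a/1 = \sum_i 1/w_i$ and $b/1 = \sum_j 1/v_j$ with $w_i, v_j \in W$, then $(a+b)/1$ is represented simply by concatenating the two lists of unit fractions. Closure under multiplication is where the multiplicative structure of $W$ is used, and this is the key computation: expanding the product of the two finite sums gives
\[
\frac{ab}{1} = \Bigl(\sum_i \frac{1}{w_i}\Bigr)\Bigl(\sum_j \frac{1}{v_j}\Bigr) = \sum_{i,j} \frac{1}{w_i v_j},
\]
and since $W$ is multiplicatively closed, each denominator $w_i v_j$ again lies in $W$, so $ab \in E$. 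Together these show that every $\Z_{\ge 0}$-combination of monomials in $G$ lies in $E$; writing a general element of $R$ as $\sum_k n_k \mu_k$ with $\mu_k$ a monomial in $G$ and $n_k \in \Z$, and grouping by sign, we see that any element of $R$ has the form $P - N$ with $P, N \in E$.

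The main obstacle is therefore closure under additive inverses, which is what promotes the sub-semiring generated by $G \cup \{1\}$ to the full subring generated by $G$. The clean way to close this gap is to exhibit $-1 \in E$: once $-1$ is $W$-Egyptian, the multiplicative closure above gives $-a = (-1)\cdot a \in E$ for every $a \in E$, and hence $P - N = P + (-1)N \in E$ by a further use of additive closure, completing the proof. When $R$ has positive characteristic $n$ this is automatic, since $-1 = (n-1)\cdot 1$ is a sum of $n-1$ copies of $1/1$; the characteristic-zero case is the delicate point, and it is there that I would focus the remaining effort, checking whether the intended reading of ``generating set for $R$ as a ring'' together with the hypothesis on $G$ yields a $W$-Egyptian representation of $-1$.
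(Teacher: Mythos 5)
Your closure computations are exactly the paper's proof: the paper shows that if $r,s$ are $W$-Egyptian then so are $r+s$ (concatenate the two representations) and $rs$ (expand the product, using that $W$ is multiplicatively closed), and then concludes with the sentence ``since every element of $R$ is built from $G$ by sums and products of elements, the result follows by induction.'' So up to your final paragraph you and the paper coincide. The difference is that the paper never mentions additive inverses at all: it tacitly reads ``generating set for $R$ as a ring'' as meaning that every element of $R$ is a finite sum of finite products of elements of $G$. That is also the only form in which the proposition is used later in the paper, where $R_u$ is generated by $\ell(R)\cup\{u^{-1}\}$ and every element of $R_u$ is literally of the form $\ell(r)\,(u^{-1})^s$, a product of elements of the generating set.

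The worry you end with is not a removable technicality, and you should not spend effort trying to close it: under the stronger, standard reading (the subring generated by $G$ equals $R$), the statement is simply false, so no argument can manufacture the $W$-Egyptian representation of $-1$ you are looking for. Take $R=\Z$, $W=\{1\}$, $G=\{1\}$. Then $G$ generates $\Z$ as a ring, and $1=\frac 11$ is $W$-Egyptian; but a sum of reciprocals of elements of $W$ is a sum of copies of $1$, hence a positive integer, so $-1$ is not $W$-Egyptian and $\Z$ is not $W$-Egyptian. The reason this issue is invisible in the classical Egyptian-domain setting (and when $W$ is the set of all regular elements) is that there the set of allowed denominators is closed under negation, so $-\frac 1w=\frac 1{-w}$ disposes of signs; an arbitrary multiplicative set $W$ need not be. So your write-up correctly isolates the one delicate point in the statement, but the resolution is interpretive rather than mathematical: with the sums-and-products reading of ``generating set'' that the paper implicitly adopts (and that its application requires), your first two paragraphs already constitute a complete proof, and with the subring reading no proof exists.
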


\begin{proof}
Suppose $r,s\in R$ are $W$-Egyptian.  Write \[
\frac r1 = \mathlarger{\mathlarger{\sum}}_{i=1}^n \frac 1 {w_i} \quad \text{and} \quad \frac s1 = \mathlarger{\mathlarger{\sum}}_{j=1}^m \frac 1{v_j},
\]
with $m,n\in \N$, and $w_i,v_j \in W$.  Then \[
\frac{r+s}1= \frac r1 + \frac s1 = 
\mathlarger{\mathlarger{\sum}}_{i=1}^n \frac 1{w_i} + \mathlarger{\mathlarger{\sum}}_{j=1}^m \frac 1{v_j} \quad \text{and} \quad \frac{rs}1 = \frac r1 \cdot \frac s1=\mathlarger{\mathlarger{\sum}}_{i=1}^n \mathlarger{\mathlarger{\sum}}_{j=1}^m \frac 1 {w_i v_j}.
\]
Since $W$ is a multiplicative set, the above are $W$-Egyptian representations of $r+s$ and $rs$.  Since every element of $R$ is built from $G$ by sums and products of elements, the result follows by induction.
\end{proof}

\begin{prop}
Let $j: A \rightarrow R$ be an injective map of commutative rings.  Let $W$ be a multiplicative set in $A$.  Suppose $R$ is $j(W)$-Egyptian.  Then there is a ring map $i: R \rightarrow W^{-1}A$ that fits into a commutative diagram: \[\xymatrix{
A \ar[rr]^j \ar[d]_{\ell_A}&&R \ar[dll]_i \ar[d]^{\ell_R}\\
W^{-1}A \ar[rr]_{W^{-1}j}&&j(W)^{-1}R
}\]
where $\ell_A: A \ra W^{-1}A$ and $\ell_R: R \ra j(W)^{-1}R$ are the localization maps $a\mapsto \frac a1$ and $r \mapsto \frac r1$.  Moreover, $A$ is $W$-Egyptian.
\end{prop}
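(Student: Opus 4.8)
The plan is to build $i$ as the inverse of the localized map $W^{-1}j$ restricted to its image, exploiting throughout that $W^{-1}j$ is injective. First I would record the natural candidate for the bottom arrow: the ring map $W^{-1}j\colon W^{-1}A \ra j(W)^{-1}R$ sending $a/w \mapsto j(a)/j(w)$. The crucial preliminary step is to check that $W^{-1}j$ is \emph{injective}. This follows because $j$ is injective and localization is exact; concretely, if $j(a)/j(w) = 0$ in $j(W)^{-1}R$ then $j(w')j(a) = j(w'a) = 0$ in $R$ for some $w' \in W$, whence $w'a = 0$ by injectivity of $j$, so $a/w = 0$ in $W^{-1}A$.

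The heart of the argument is to show that $\ell_R(R)$ lies inside the image of $W^{-1}j$. For $r = 0$ this is clear. For $0 \neq r \in R$, the hypothesis that $R$ is $j(W)$-Egyptian lets me write $\frac r1 = \sum_{k=1}^n \frac 1{j(w_k)}$ with $w_k \in W$; clearing denominators over the common element $w = \prod_{k=1}^n w_k \in W$ produces an $a \in A$ with $\frac r1 = \frac{j(a)}{j(w)} = (W^{-1}j)\!\left(\frac aw\right)$. Thus $\ell_R$ factors through the image of $W^{-1}j$, and since $W^{-1}j$ is an isomorphism onto that image I can define $i := (W^{-1}j)^{-1} \circ \ell_R \colon R \ra W^{-1}A$. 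This $i$ is a ring homomorphism, and both triangles commute by direct substitution: $W^{-1}j \circ i = \ell_R$ holds by construction, $W^{-1}j \circ \ell_A = \ell_R \circ j$ is immediate from the formulas, and composing the latter with $(W^{-1}j)^{-1}$ gives $i \circ j = \ell_A$.

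For the ``moreover'' clause I would again lean on injectivity of $W^{-1}j$. Given $0 \neq a \in A$, injectivity of $j$ gives $j(a) \neq 0$, so the $j(W)$-Egyptian hypothesis yields $\frac{j(a)}1 = \sum_{k=1}^n \frac 1{j(w_k)}$ in $j(W)^{-1}R$. Since the right-hand side equals $(W^{-1}j)\!\left(\sum_{k=1}^n \frac 1{w_k}\right)$ and the left-hand side equals $(W^{-1}j)(a/1)$, injectivity of $W^{-1}j$ forces $\frac a1 = \sum_{k=1}^n \frac 1{w_k}$ in $W^{-1}A$, exhibiting $a$ as $W$-Egyptian; as $a$ was arbitrary, $A$ is $W$-Egyptian. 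The step I expect to be the real obstacle is the well-definedness of $i$: producing $i(r)$ element-by-element would force me to check independence of the chosen Egyptian representation, whereas the clean route is to observe that injectivity of $W^{-1}j$ makes it an isomorphism onto its image, so that \emph{once} $\ell_R(R)$ is known to sit in that image the defining factorization is automatic and representation-independent. The commutativity of the diagram and the $W$-Egyptianness of $A$ then both fall out of the same injectivity.
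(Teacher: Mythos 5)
Your proof is correct, and it uses the same two key ingredients as the paper: the injectivity of $W^{-1}j$ (deduced, exactly as in the paper, from injectivity of $j$ plus exactness of localization) and the observation that the $j(W)$-Egyptian hypothesis places every $\ell_R(r)$ in the image of $W^{-1}j$. The difference is in the packaging, and yours is arguably cleaner. The paper defines $i$ element-by-element: pick an Egyptian representation $\frac r1 = \sum_k \frac1{j(w_k)}$, set $i(r) := \sum_k \frac1{w_k}$, then use injectivity of $W^{-1}j$ to check that this is independent of the chosen representation, and finally assert (without detail) that the resulting map is a ring homomorphism. You instead prove once that $\ell_R(R) \subseteq \Img(W^{-1}j)$ and define $i := (W^{-1}j)^{-1}\circ \ell_R$, where $(W^{-1}j)^{-1}$ is the inverse of the isomorphism onto its image; this makes well-definedness and the ring-homomorphism property automatic (an inverse of a ring isomorphism onto a subring is a ring map, and a composite of ring maps is a ring map), and both commutativity statements become one-line formal computations. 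So your route buys rigor precisely where the paper is most informal (``it is easy to see that $i$ is in fact a ring map''), at the cost of introducing the image subring as an intermediate object. One further small point in your favor: in the ``moreover'' step you note explicitly that $j(a)\neq 0$ for $a \neq 0$ before invoking the $j(W)$-Egyptian hypothesis, a hypothesis-check the paper elides. Your clearing-of-denominators step could be streamlined — $\sum_k \frac1{j(w_k)}$ is already visibly $(W^{-1}j)\bigl(\sum_k \frac1{w_k}\bigr)$ — but it is not wrong.
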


In particular if $A$ is a domain and $0\notin W$, it follows that any $W$-Egyptian domain that contains $A$ as a subring must be isomorphic to an overring of $A$.

\begin{proof}
Define $i$ as follows. For $r\in R$, since $R$ is $j(W)$-Egyptian there exist $w_1, \ldots, w_n \in W$ such that in $W^{-1}R$, $\frac r1 = \frac 1{j(w_1)} + \frac 1{j(w_2)} + \cdots + \frac 1{j(w_n)}$.  We set $i(r) := \frac 1{w_1} + \cdots + \frac 1{w_n}$.

First we must show $i$ is well-defined.  Accordingly, suppose $v_1, \ldots, v_s \in W$ such that $\frac r1 = \frac 1 {j(v_1)} + \cdots + \frac 1{j(v_s)}$.  Then we have $(W^{-1}j)(\frac 1 {w_1} + \cdots +\frac 1{w_n}) = (W^{-1}j)(\frac 1 {v_1} + \cdots + \frac 1{v_s})$.  But since $j$ is injective and localization is an exact functor, the map $W^{-1}j :W^{-1}A \ra W^{-1}R$ is also injective.  Hence, $\frac 1 {w_1} + \cdots +\frac 1{w_n} = \frac 1 {v_1} + \cdots + \frac 1{v_s}$, completing the proof that $i$ is well-defined as a set map.

After this, it is easy to see that $i$ is in fact a ring map.

Now let $a\in A$.  Again since $R$ is $j(W)$-Egyptian, there exist $w_1, \ldots, w_n \in W$ such that $\frac {j(a)}1 = \frac 1{j(w_1)} + \cdots + \frac 1{j(w_n)}$.  That is, $\frac a1 - \frac 1{w_1} - \cdots - \frac 1{w_n} \in \ker (W^{-1}j) = 0$, so that $\ell_A(a) = \frac a1 = \frac 1{w_1} + \cdots + \frac 1{w_n} = i(j(a))$. Hence, $i \circ j = \ell_A$, and $A$ is $W$-Egyptian.

To show that the lower triangle is commutative, let $r\in R$. Choose $w_1, \ldots, w_n \in W$ such that $\frac r1 = \frac 1{j(w_1)} + \cdots + \frac 1{j(w_n)}$.  Then $\ell_R(r) = \frac r1= (W^{-1}j)(\frac 1{w_1} + \cdots +\frac 1{w_n}) = ((W^{-1}j) \circ i)(r)$.
\end{proof}

The following is a generalization of Theorem~\ref{thm:prevs}(\ref{it:generic}).%\cite[Lemma 3.9]{nme-Edom}.

\begin{thm}
    Let $A$ be a $W$-Egyptian ring ($W$ a multiplicative subset of $A$), $R$ an $A$-algebra finitely generated by $u_1, \ldots, u_t \in R$, and $u= \prod_{j=1}^t u_j$. Then $R_u$ is $V$-Egyptian, where $V:=\langle \ell(\phi(W)),u,  u_1^{-1}, \ldots, u_t^{-1}\rangle$, $\phi: A \ra R$ is the structure map, and $\ell: R \ra R_u$ is the localization map.
\end{thm}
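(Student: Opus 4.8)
The plan is to reduce everything to Proposition~\ref{pr:gens} by exhibiting a convenient ring-generating set of $R_u$ each of whose members is $V$-Egyptian. Since $R = \phi(A)[u_1,\dots,u_t]$ as a ring and $R_u = R[1/u]$ as a ring, the set
\[
G := \ell(\phi(A)) \cup \{\ell(u_1), \dots, \ell(u_t)\} \cup \{1/u\}
\]
generates $R_u$ as a ring. By Proposition~\ref{pr:gens} it then suffices to show each element of $G$ is $V$-Egyptian. First I would record the basic membership facts: $\ell(\phi(W)) \subseteq V$, that $\ell(u) \in V$, and that each $\ell(u_j)$ is a unit of $R_u$ whose inverse $u_j^{-1}$ lies in $V$ (this is why $u = \prod_k u_k$ was inverted). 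If $0 \in V$ then $V^{-1}R_u = 0$ and the statement is trivial by the earlier remark, so I may assume $0 \notin V$.

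The two ``localization'' generators are then immediate. Because $u_j^{-1} \in V$ is the inverse of $\ell(u_j)$ in $R_u$, we have $\frac{\ell(u_j)}{1} = \frac{1}{u_j^{-1}}$ in $V^{-1}R_u$, a single unit fraction with denominator in $V$; and because $\ell(u) \in V$, the element $1/u \in R_u$ satisfies $\frac{1/u}{1} = \frac{1}{\ell(u)}$, again a single unit fraction with denominator in $V$. Thus $\ell(u_1), \dots, \ell(u_t)$ and $1/u$ are all $V$-Egyptian.

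The crux is showing $\ell(\phi(a))$ is $V$-Egyptian for every $a \in A$. Here I would invoke the universal property of localization: the composite $A \xrightarrow{\phi} R \xrightarrow{\ell} R_u \to V^{-1}R_u$ carries every $w \in W$ into $V$, hence to a unit, so it factors through a ring homomorphism $\Psi \colon W^{-1}A \to V^{-1}R_u$ with $\Psi(\tfrac a1) = \tfrac{\ell(\phi(a))}{1}$ and $\Psi(\tfrac 1w) = \tfrac{1}{\ell(\phi(w))}$. Now for $a \neq 0$, since $A$ is $W$-Egyptian we may write $\frac a1 = \sum_i \frac{1}{w_i}$ in $W^{-1}A$ with $w_i \in W$; applying $\Psi$ yields $\frac{\ell(\phi(a))}{1} = \sum_i \frac{1}{\ell(\phi(w_i))}$ in $V^{-1}R_u$ with each $\ell(\phi(w_i)) \in V$, a $V$-Egyptian representation (and $a = 0$ gives the empty sum). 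Hence every element of $\ell(\phi(A))$ is $V$-Egyptian.

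With every element of $G$ shown to be $V$-Egyptian, Proposition~\ref{pr:gens} gives that $R_u$ is $V$-Egyptian, completing the argument. I expect the main obstacle to be the third step: one must verify that $\Psi$ is genuinely well-defined as a ring homomorphism — that is, that $W$ really lands among the units of $V^{-1}R_u$, which is precisely where the definition of $V$ is used — and that $\Psi$ carries unit fractions to unit fractions whose denominators remain inside $V$. The two remaining generator checks and the final appeal to Proposition~\ref{pr:gens} are routine bookkeeping about inverses in a localization.
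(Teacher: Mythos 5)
Your proof is correct, and it rests on the same key lemma as the paper's --- Proposition~\ref{pr:gens} --- but you apply it to a finer generating set, and this changes the shape of everything downstream. The paper takes $\ell(R) \cup \{u^{-1}\}$ as its generating set, so it must still show that an \emph{arbitrary} element of $\ell(R)$ is $V$-Egyptian: it expands $r \in R$ as a polynomial in $u_1, \ldots, u_t$ over $\phi(A)$ and writes each monomial $\phi(a)u_1^{k_1}\cdots u_t^{k_t}$ as a sum of unit fractions with denominators $\ell(\phi(w_j))(u_1^{-1})^{k_1}\cdots(u_t^{-1})^{k_t} \in V$, in effect redoing by hand, for monomials, the product manipulation that Proposition~\ref{pr:gens} already encapsulates; it then also checks directly that a general element $r/u^s$ of $R_u$ is $V$-Egyptian, a step that the initial appeal to Proposition~\ref{pr:gens} in fact already subsumes. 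By instead taking $G = \ell(\phi(A)) \cup \{\ell(u_1),\ldots,\ell(u_t)\} \cup \{1/u\}$, you delegate all of that bookkeeping to Proposition~\ref{pr:gens} and are left with three one-line verifications. You also make explicit a point the paper leaves implicit: transporting the identity $\frac a1 = \sum_i \frac 1{w_i}$ from $W^{-1}A$ to $V^{-1}(R_u)$ requires the ring map $\Psi \colon W^{-1}A \to V^{-1}(R_u)$ furnished by the universal property of localization (which exists precisely because $\ell(\phi(W)) \subseteq V$ becomes invertible there); the paper simply asserts the resulting equation in $V^{-1}(R_u)$. Net effect: your route is more modular and somewhat cleaner, while the paper's is more computational and explicit; both are valid proofs of the theorem.
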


\begin{proof}
First note that for $1\leq i \leq t$, $u_i^{-1} \in R_u$, as $u_i^{-1} = \frac {\prod_{j\neq i} u_j} u$.  Thus, $V \subseteq R_u$.  Note that $R_u$ is generated as a ring by $\ell(R)$ and $u^{-1}$.  We have $u^{-1} = \frac 1u$, where $u\in V$, so by Proposition~\ref{pr:gens} we need only show that the elements of $\ell(R)$ are $V$-Egyptian.

Let $r\in R$.  Then there is some polynomial $p \in \phi(A)[X_1, \ldots, X_t]$ such that $r = p(u_1, \ldots, u_t)$.  This $p$ is a sum of terms of the form $\phi(a) X_1^{k_1} \cdots X_t^{k_t}$, with $a\in A$ and $k_1, \ldots, k_t \in \N_0$.  But $a$ is $W$-Egyptian, so we have in $W^{-1}A$ that $\frac a1 = \frac 1 {w_1} + \cdots + \frac 1{w_n}$, with $n\in \N$ and each $w_j \in W$.  Thus in $V^{-1}(R_u)$, \[
\frac {\phi(a) u_1^{k_1} \cdots u_t^{k_t}}1 = \mathlarger{\mathlarger{\sum}}_{j=1}^t \frac 1 {\phi(w_j) (u_1^{-1})^{k_1} \cdots {(u_t^{-1})^{k_t}}}
\]
Since all of the above are $V$-unit fractions in $V^{-1}R_u$, and since $r$ is a sum of finitely many such terms, it follows that $\frac {\ell(r)}1 \in V^{-1}R_u$ is $V$-Egyptian.

Finally, any element of $R_u$ is of the form $\frac r{u^s}$ for some $s \in \N$.  By the above, there exist $v_1, \ldots, v_m \in V$ with $\frac r1 = \displaystyle\sum_{j=1}^m \frac 1{v_j}$.  Since $u^s v_j \in V$ for $1\leq j \leq m$, it follows that $\frac{r/u^s}1 = \frac r{u^s} = \displaystyle \sum_{j=1}^m \frac 1{u^sv_j}$ is $V$-Egyptian.
\end{proof}

\begin{example}
This more flexible version of Egyptianness works well with finite direct products of rings.  Indeed, if $R$ is $V$-Egyptian and $S$ is $W$-Egyptian, then $R \times S$ is $(V \times W)$-Egyptian.

To see this, let $r\in R$, $s \in S$, and write $r = \sum_{i=1}^n 1/v_i$ and $s = \sum_{j=1}^m 1/w_j$.  Then in $(R \times S)_{V \times W}$, we have \[
\frac {(r,s)}{(1,1)} = \frac{(r,1)}{(1,1)} \cdot \frac{(1,s)}{(1,1)}= \left(\sum_{i=1}^n \frac{(1,1)}{(v_i,1)}\right) \cdot \left(\sum_{j=1}^m \frac{(1,1)}{(1,w_j)} \right) = \sum_{i=1}^n \sum_{j=1}^m \frac{(1,1)} {(v_i,w_j)}.
\]
\end{example}

\begin{example}
However, the above does not extend to infinite direct products.  Let $R = \prod_{j=1}^\infty \Z$, and consider the element $e = (1,2,3,4,\ldots)\in R$.  If $e$ is Egyptian, then there are elements $c_1, \ldots, c_t \in \prod_{j=1}^\infty (\Z \setminus \{0\})$ with $e = \frac 1{c_1} + \cdots \frac 1{c_t}$.  Write $c_i=(c_{i1},c_{i2},\ldots)$.  Then matching the $t+1$st coordinate of each side, we have \[
t+1 = \frac 1 {c_{1,t+1}} + \frac 1 {c_{2,t+1}} + \cdots + \frac 1 {c_{t,t+1}}.
\]
But since each $c_{ij}$ is a nonzero integer, we have each $1/c_{ij} \leq 1$.  Thus, $t+1 \leq 1+ 1+ \cdots + 1 = t$, which is a contradiction.
\end{example}

Finally, after a lemma that shows that relative Egyptianness passes to quotient rings, we have a theorem that says that semigroup rings are almost never Egyptian.

\begin{lemma}\label{lem:quotient}
Let $R$ be a commutative ring, $I$ an ideal of $R$, and $W \subseteq R$ a multiplicative set.  Let $r\in R$ be $W$-Egyptian.  Then $\bar r \in \bar R$ is $\bar W$-Egyptian, where $\ \bar{ }\ $ denotes going modulo $I$.  Thus, if $R$ is $W$-Egyptian, then $\bar R$ is $\bar W$-Egyptian.
\end{lemma}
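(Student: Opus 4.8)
The plan is to push the given $W$-Egyptian representation of $r$ forward along a ring map $W^{-1}R \ra \bar W^{-1}\bar R$ induced by the quotient $\pi \colon R \ra \bar R$. First I would construct this map via the universal property of localization. Writing $\ell \colon \bar R \ra \bar W^{-1}\bar R$ for the localization map, the composite $\ell \circ \pi \colon R \ra \bar W^{-1}\bar R$ carries each $w \in W$ to $\frac{\bar w}1$, which is a unit in $\bar W^{-1}\bar R$ precisely because $\bar w \in \bar W$. By the universal property, $\ell \circ \pi$ therefore factors uniquely through $\ell_R \colon R \ra W^{-1}R$, say as $\ell \circ \pi = \Phi \circ \ell_R$ for a unique ring homomorphism $\Phi \colon W^{-1}R \ra \bar W^{-1}\bar R$, which on elements sends $\frac aw \mapsto \frac{\bar a}{\bar w}$.

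With $\Phi$ in hand, the first assertion should be immediate. Since $r$ is $W$-Egyptian, I may write $\frac r1 = \frac 1{w_1} + \cdots + \frac 1{w_n}$ in $W^{-1}R$ with each $w_j \in W$. Applying the ring homomorphism $\Phi$ and using $\Phi(\frac 1{w_j}) = \frac 1{\bar w_j}$ yields $\frac{\bar r}1 = \frac 1{\bar w_1} + \cdots + \frac 1{\bar w_n}$ in $\bar W^{-1}\bar R$, with each $\bar w_j \in \bar W$; this is exactly a $\bar W$-Egyptian representation of $\bar r$.

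For the final (``Thus'') claim, I would argue as follows. Assume $R$ is $W$-Egyptian and let $\bar r$ be an arbitrary nonzero element of $\bar R$. Choosing any lift $r \in R$ of $\bar r$, the fact that $\bar r \neq 0$ forces $r \neq 0$, so $r$ is $W$-Egyptian by hypothesis, whence $\bar r$ is $\bar W$-Egyptian by the first part. As every element of $\bar R$ arises this way, $\bar R$ is $\bar W$-Egyptian.

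I do not anticipate a genuine obstacle here; the whole argument rides on the naturality of localization encoded in $\Phi$. The one point deserving a moment's care is the degenerate case in which some $w_j$ lies in $I$, so that $\bar w_j = 0$ and hence $0 \in \bar W$. In that situation $\bar W^{-1}\bar R = 0$, so every element — in particular $\bar r$ — is vacuously $\bar W$-Egyptian, and the displayed identity above still holds since it is then an equation in the zero ring. Thus the conclusion survives this edge case, and the construction of $\Phi$ carries the entire proof.
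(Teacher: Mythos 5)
Your proof is correct, but it takes a more structural route than the paper's. The paper argues bare-hands: it converts the equality $\frac r1 = \frac 1{w_1} + \cdots + \frac 1{w_n}$ in $W^{-1}R$ into an honest equation in $R$ itself --- there is a witness $v \in W$ with $\left(\prod_{i=1}^n w_i\right) v r = v \cdot \sum_{i=1}^n \prod_{j\neq i} w_j$ --- reduces that equation modulo $I$, and then reads off the identity $\frac{\bar r}1 = \frac 1{\overline{w_1}} + \cdots + \frac 1{\overline{w_n}}$ in $\bar W^{-1}\bar R$. You instead build the induced map $\Phi \colon W^{-1}R \ra \bar W^{-1}\bar R$ once and for all via the universal property of localization and push the representation forward. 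The two arguments accomplish the same thing --- your $\Phi$ is precisely the passage that the paper verifies by hand for the one equation it needs --- but yours buys brevity and reusability (any identity among fractions descends to the quotient, not just Egyptian representations), while the paper's buys self-containedness and keeps visible the multiplier $v \in W$ that equality in a localization genuinely requires, a point worth remembering when $W$ may contain zero-divisors. Your closing caution about the degenerate case $0 \in \bar W$ is correct but not actually necessary: neither argument breaks there, since an identity in the zero ring is still an identity, and this is consistent with the paper's own remark that a ring is (strictly) $W$-Egyptian whenever $0 \in W$. Your handling of the ``Thus'' clause (a nonzero $\bar r$ lifts to a nonzero $r$) supplies a step the paper leaves implicit.
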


\begin{proof}
Write $\frac r 1 = \frac 1 {w_1} + \cdots + \frac 1 {w_n}$, with each $w_i \in W$.  Then there is some $v\in W$ such that in $R$, we have $\left(\prod_{i=1}^n w_i\right) vr = v\cdot \sum_{i=1}^n \prod_{j \neq i} w_j$.  Going mod $I$, we have $\left(\prod_{i=1}^n \overline{w_i}\right) \bar v\bar r = \bar v\cdot \sum_{i=1}^n \prod_{j \neq i} \overline{w_j}$, with $\bar v$ and each $\overline{w_i}$ in $\bar W$.  Then in ${\bar W}^{-1}\bar R$, we have $\frac {\bar r}1 = \frac 1{\overline{w_1}} + \cdots + \frac 1 {\overline{w_n}}$, whence $\bar r$ is $\bar W$-Egyptian.
\end{proof}

The following is a generalization of Theorem~\ref{thm:prevs}(\ref{it:semigroup}).%\cite[Theorem 2.6]{nme-Edom}.
\begin{thm}
Let $A$ be a commutative ring.  Let $\Lambda \subseteq \Q^n$ be an additive submonoid, and $R = A[\Lambda]$.  If $\Lambda$ is not a group, then $R$ is not an Egyptian ring.
\end{thm}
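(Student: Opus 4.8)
The plan is to reduce to the integral-domain case already recorded in Theorem~\ref{thm:prevs}(\ref{it:semigroup}) by passing to the quotient of $R$ by the extension of a minimal prime of $A$. I assume $A \neq 0$ (otherwise $R = 0$ and the statement is degenerate, matching the convention that domains are nonzero), and I suppose toward a contradiction that $R = A[\Lambda]$ is Egyptian, i.e.\ $W$-Egyptian for $W$ the set of regular elements of $R$. Choose a minimal prime $\p$ of $A$ and set $D = A/\p$, an integral domain. Since $A[\Lambda]$ is free as an $A$-module on the monoid basis $\{t^\lambda : \lambda \in \Lambda\}$ (where $t^\lambda$ denotes the basis element attached to $\lambda$), reduction mod $\p$ yields a ring isomorphism $R/\p R \cong D[\Lambda]$. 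By Theorem~\ref{thm:prevs}(\ref{it:semigroup}), the domain $D[\Lambda]$ is \emph{not} Egyptian, because $\Lambda$ is not a group; this is the target of the contradiction.

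To manufacture that contradiction I would apply Lemma~\ref{lem:quotient} with $I = \p R$: since $R$ is $W$-Egyptian, $\bar R = D[\Lambda]$ is $\bar W$-Egyptian, where $\bar W$ is the image of $W$. The crux is then to verify that $\bar W$ consists of regular elements of $D[\Lambda]$ — equivalently, since $D[\Lambda]$ is a domain, of \emph{nonzero} elements. Granting this, $\bar W$ is a multiplicative set of regular elements, so Remark~\ref{rem:mscontain} upgrades ``$\bar W$-Egyptian'' to Egyptian, contradicting the previous paragraph.

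The heart of the argument, and the step I expect to be the main obstacle, is thus the claim that a regular element $w \in A[\Lambda]$ cannot have all of its coefficients in $\p$, so that $\bar w \neq 0$. I would argue the contrapositive. Write $w = \sum_{i=1}^k a_i t^{\lambda_i}$ with every $a_i \in \p$. Because $\p$ is a \emph{minimal} prime, $A_\p$ has a unique prime ideal $\p A_\p$, which is therefore its nilradical; hence each $a_i/1$ is nilpotent in $A_\p$. Using the identity $(A \setminus \p)^{-1}(A[\Lambda]) \cong A_\p[\Lambda]$ and the fact that the nilradical of a commutative ring is an ideal, the image $w/1 = \sum (a_i/1)\,t^{\lambda_i}$ is a finite sum of nilpotents, hence nilpotent: $(w/1)^N = 0$ for some $N \geq 1$. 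Clearing the denominator, there is $s \in A \setminus \p$ (so $s \neq 0$) with $s\,w^N = 0$ in $A[\Lambda]$.

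Finally I would extract an explicit zerodivisor relation: among $s, sw, sw^2, \ldots, sw^N = 0$ let $j$ be least with $s w^j = 0$; then $j \geq 1$ since $sw^0 = s \neq 0$, and $s w^{j-1} \neq 0$ while $w \cdot (s w^{j-1}) = s w^j = 0$, so $w$ is a zerodivisor, contradicting regularity. This proves the claim and completes the reduction. I do not expect the $A$-freeness isomorphism, the localization identity, or the invocation of Lemma~\ref{lem:quotient} to cause difficulty; the only genuinely delicate point is that I must exploit minimality of $\p$ — to force nilpotence in $A_\p$ — rather than any Noetherian or associated-prime hypothesis, since $A$ is an arbitrary commutative ring.
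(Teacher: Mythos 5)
Your proof is correct, and its skeleton is the same as the paper's: pick a minimal prime $\p$ of $A$, note that $R/\p R \cong (A/\p)[\Lambda]$ is a non-Egyptian integral domain by Theorem~\ref{thm:prevs}(\ref{it:semigroup}), push a hypothetical Egyptian representation down via Lemma~\ref{lem:quotient}, and invoke Remark~\ref{rem:mscontain} after checking that the image $\bar W$ of the set of regular elements avoids $0$. The one place you genuinely diverge is in proving that avoidance, i.e.\ that every element of $\p R$ is a zerodivisor. The paper gets this by citation: a minimal prime is a weak Bourbaki prime, so by a lemma of Lazard \cite{Laz-plat} finitely many elements $p_1, \dots, p_t \in \p$ admit a \emph{common} nonzero annihilator $d \in A$, which kills $f = \sum_i p_i \lambda_i$ in one stroke. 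You instead give a self-contained argument: minimality of $\p$ makes $\p A_\p$ the nilradical of $A_\p$, so $w/1$ is nilpotent in $A_\p[\Lambda]$, and clearing the denominator plus the least-power trick ($j$ minimal with $sw^j = 0$) produces a nonzero element of $R$ annihilated by $w$. The two arguments are closely related --- your localization-plus-nilpotence reasoning is essentially the standard proof of the fact the paper cites --- so the trade-off is citation economy (the paper) versus self-containedness (you); a minor cosmetic difference is that your annihilator lives in $R$ rather than in $A$, which is harmless. Both routes, correctly, lean only on minimality of $\p$ and nowhere on Noetherian or associated-prime hypotheses.
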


\begin{proof}
Let $\p$ be a minimal prime of $A$.  Then $\p R$ consists of zero-divisors.  To see this, let $f\in \p R$.  Then $f = p_1 \lambda_1 + \cdots + p_t \lambda_t$, with $p_i \in \p$ and $\lambda_i \in \Lambda$.  Since $\p$ is a weak Bourbaki prime of $A$ (see \cite[Definition (e)]{IrRu-ass}), there is some nonzero $d\in A$ with $(p_1, \ldots, p_t) \subseteq \ann_Ad$ \cite[Chapitre II, Lemme 1.13]{Laz-plat}.
Thus, $df = (dp_1)\lambda_1 + \cdots + (dp_t)\lambda_t = 0$, but $d\neq 0$, so $f$ is a zero-divisor.

Now let $W$ be the set of regular elements of $R$. Then by the above, $W \cap \p R = \emptyset$, so $\bar W$ consists of nonzero elements of $R/\p R$.  By \cite[Theorem 2.6]{nme-Edom}, $R/\p R \cong (A/\p)[\Lambda]$ is a non-Egyptian integral domain, hence it cannot be $\bar W$-Egyptian either, since $\bar W$ consists of nonzero elements; see Remark~\ref{rem:mscontain}.  Then by Lemma~\ref{lem:quotient}, $R$ is not Egyptian.
\end{proof}

\section*{Acknowledgment}
I offer here my thanks to the anonymous referee, who insisted that I provide explanations herein to make the paper more self-contained. As a result, I believe the paper reads better now.  The referee also gave several useful corrections and suggestions which also have improved the paper.

\providecommand{\bysame}{\leavevmode\hbox to3em{\hrulefill}\thinspace}
\providecommand{\MR}{\relax\ifhmode\unskip\space\fi MR }
% \MRhref is called by the amsart/book/proc definition of \MR.
\providecommand{\MRhref}[2]{%
	\href{http://www.ams.org/mathscinet-getitem?mr=#1}{#2}
}
\providecommand{\href}[2]{#2}


\begin{thebibliography}{GLO22}
	
	\bibitem[AM69]{AtMac-ICA}
	Michael~F. Atiyah and Ian~G. Macdonald, \emph{Introduction to commutative
		algebra}, Addison-Wesley Publishing Co., Reading, Mass.-London-Don Mills,
	Ont., 1969.
	
	\bibitem[DG66]{DuGr-FibE}
	Marguerite Dunton and Richard~E. Grimm, \emph{Fibonacci on {E}gyptian
		fractions}, Fibonacci Quart. \textbf{4} (1966), 339--354.
	
	\bibitem[Eps23]{nme-Edom}
	Neil Epstein, \emph{Egyptian integral domains}, {arXiv}:2303.15685 [math.AC],
	submitted, 2023.
	
	\bibitem[Gil72]{Gil-phar}
	Richard~J. Gillings, \emph{Mathematics in the time of the pharaohs}, MIT Press,
	Cambridge, Mass.-London, 1972.
	
	\bibitem[GLO22]{GLO-Egypt}
	Lorenzo Guerrieri, Alan Loper, and Greg Oman, \emph{From ancient {E}gyptian
		fractions to modern algebra}, submitted, 15 pages, 2022.
	
	\bibitem[Gra13]{Gra-ErdEgypt}
	Ronald~L. Graham, \emph{Paul {E}rd{\H o}s and {E}gyptian fractions}, Erd{\H o}s
	centennial, Bolyai Soc. Math. Stud., vol.~25, J\'{a}nos Bolyai Math. Soc.,
	Budapest, 2013, pp.~289--309.
	
	\bibitem[IR84]{IrRu-ass}
	Juana Iroz and David~E. Rush, \emph{Associated primes in non-{N}oetherian
		rings}, Canad. J. Math. \textbf{36} (1984), no.~2, 344--360.
	
	\bibitem[Laz69]{Laz-plat}
	Daniel Lazard, \emph{Autour de la platitude}, Bull. Soc. Math. France
	\textbf{97} (1969), 81--128.
	
	\bibitem[Rei14]{Rei-count}
	David Reimer, \emph{Count like an {E}gyptian}, Princeton University Press,
	Princeton, NJ, 2014.
	
\end{thebibliography}
\end{document}